\theoremstyle{definition}
\newtheorem{theo}{Theorem}
\theoremstyle{definition}
\newtheorem{case}{Case}[theo]
\newtheorem{definition}{Definition}
\begin{document}
\title{New Vertex Ordering Characterizations of Circular-Arc Bigraphs}
\def\correspondingauthor{\footnote{Corresponding author}}
\author{ Indrajit Paul, Ashok Kumar Das\correspondingauthor{}\\Department of Pure Mathematics, University of Calcutta\\
Email Address -  
paulindrajit199822@gmail.com \&\\ ashokdas.cu@gmail.com}
\maketitle
\begin{abstract}
  In this article, we present two new characterizations of circular-arc bigraphs based on their vertex ordering. Also, we provide a characterization of circular-arc bigraphs in terms of forbidden patterns with respect to a particular ordering of their vertices.                                                                              
\end{abstract}
\noindent {\bf Keywords:}
circular-arc bigraphs, vertex ordering, total-circular ordering, bi-circular ordering , forbidden pattern
\section{Introduction}
A graph $G=(V,E)$ is a circular-arc graph if it is the intersection graph of circular-arcs of a host circle. A bipartite graph (in short, bigraph) $B=(X,Y,E)$ is a circular-arc bigraph if there exists a family $\mathcal{A}=\{ A_v : v\in X\cup Y\}$ of circular-arcs such that $uv\in E$ if and only if $A_u\cap A_v\neq \phi$, where $u\in X$, $v\in Y$.\\
The problem of characterization of circular-arc graphs was initiated by Klee \cite{klee}. Circular-arc graphs and their subclasses like proper circular-arc graphs (if there exist no two arcs in the representation that one is properly contained in another), Helly circular-arc graphs (if the circular-arc representation satisfy the Helly property) have been extensively studied by Tucker and others \cite{Gavril, tucker1, tucker, tucker2}. Very recently an obstruction characterization and certifying recognition algorithm for circular-arc graphs have been given by M.Francis, P.Hell and J.Stacho \cite{fhs}. But the bipartite version of the circular-arc graphs, i.e. circular-arc bigraphs remains a relatively less explored field. Sen et al. \cite{sdw} introduced circular-arc di/bigraphs.  They \cite{bdgs,sdw} also gave several characterizations of circular-arc bigraphs. Das, Chakraborty \cite{dc} and Safe \cite{safe} studied proper circular-arc bigraphs. Most of the characterizations are based on the adjacency matrix.  In this paper first we provide two characterizations of circular-arc bigraphs based on their vertex ordering. Hell and Huang \cite{hell} characterized interval bigraphs ( intersection bigraphs of  family of intervals) using forbidden patterns with respect to particular vertex ordering. Motivated by this result, we give an analogous characterization of circular-arc bigraphs in terms of forbidden patterns with respect to a specific vertex ordering.

\section{Main Result}
In this section, we introduce various types of vertex orderings for bigraphs and explore their role in characterizing circular-arc bigraphs. We provide  characterizations of circular-arc bigraphs based on these orderings. Furthermore, we establish a characterization using forbidden patterns, which could be one of the most fascinating characterizations of circular-arc bigraphs to date.\\
First, we will define the \textit{total-circular ordering} of  the vertices of a bigraph.
\begin{definition}\label{d1}
Consider a bipartite graph $B=(X,Y,E)$ of order $n$. Order the vertices of $B$ from $1$ to $n$ and arrange them on an $n$-hour clock, such that the $i^{\text{th}}$ vertex is on the $i^{\text{th}}$ hour marker. Assume that the vertex set $X\cup Y$ satisfyies the following conditions:
\begin{enumerate}[(a)]

    \item $x_iy_j\in E$ ($i>j$) implies
    \begin{itemize}
        \item either $x_iy_k\in E$, for all possible $y_k$, where $k\in \{ j+1, j+2,...,i-2,i-1\}$
        \item or, $x_ly_j\in E$, for all possible $x_l$, where $l\in \{i+1,i+2,...,n,1,2,...,j-1\}$,
    \end{itemize}
     \item $x_iy_j\in E$ ($i<j$) implies
    \begin{itemize}
        \item either $x_ky_j\in E$, for all possible $x_k$, where $k\in \{ i+1, i+2,...,j-2,j-1\}$
        \item or, $x_iy_l\in E$, for all possible $y_l$, where $l\in \{j+1,j+2,...,n,1,2,...,i-1\}$.
    \end{itemize}
\end{enumerate}
Then the vertex set $X\cup Y$ of $B$ is said to have a \textit{ total-circular ordering}.\\
 Using the total-circular ordering of a bipartite graph, we will characterize circular-arc bigraphs in the following theorem:
\end{definition}
\begin{theo}\label{t3}
    \textit{A bigraph $B=(X,Y,E)$ is a circular-arc bigraph if and only if the vertex set $X\cup Y$ of $B$ has a \textit{total-circular ordering}}.
   
\end{theo}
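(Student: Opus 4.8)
The plan is to prove the two implications separately, keying everything to the \emph{clockwise endpoints} of the arcs. It helps to first restate the total-circular condition symmetrically: with the $n$ vertices on the clock, an edge $uv$ cuts the clock into two arcs between the positions of $u$ and $v$, and on each arc exactly one of $u,v$ is the endpoint reached last going clockwise. Conditions (a) and (b) together assert precisely that \emph{for every edge $uv$, on at least one of these two arcs the clockwise-last endpoint is adjacent to every opposite-colour vertex lying strictly inside the arc}; verifying this equivalence is a short split according to the sign of $i-j$, and it also exhibits the property as invariant under rotating the cyclic labelling, so in the first direction the cyclic order may be cut anywhere.

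\textbf{From a representation to a total-circular ordering.} Suppose $B$ has a circular-arc representation $\{A_v\}$ on a host circle. After a harmless perturbation all $2n$ arc endpoints are distinct. Let $t_v$ be the endpoint of $A_v$ met last when it is traced clockwise, and order the vertices $1,\dots,n$ so that $t_1,\dots,t_n$ occur in this cyclic order on the host circle. For an edge $uv$ I use the elementary fact that $A_u\cap A_v\neq\varnothing$ forces $t_u\in A_v$ or $t_v\in A_u$, since the clockwise end of the overlap arc is $t_u$ or $t_v$. Assume $t_u$ precedes $t_v$ clockwise. If $t_u\in A_v$, then the clock-arc running clockwise from $u$ to $v$ lies inside $A_v$, so every opposite-colour vertex on it meets $A_v$ and is adjacent to $v$, which is the clockwise-last endpoint of that arc. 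If instead $t_v\in A_u$, then the complementary clock-arc lies inside $A_u$ and its clockwise-last endpoint $u$ is adjacent to every opposite-colour vertex on it. Either way the reformulated condition holds for $uv$, so the ordering is total-circular.

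\textbf{From a total-circular ordering to a representation.} Conversely, realise the clock as a host circle with one \emph{marker} per position and give the vertex $v$ at position $i$ an arc $A_v$ ending exactly at marker $i$: sweeping positions counterclockwise from $i$, let the \emph{run} of $v$ be the longest initial block of opposite-colour positions all of which are neighbours of $v$, and let $A_v$ start just counterclockwise of the last marker of this run (take $A_v$ to be the whole circle when $v$ is adjacent to every opposite-colour vertex). With the start-points placed generically in the gaps between markers, two arcs meet iff one contains the other's ending marker; hence for $v\in X$ at position $i$ and $w\in Y$ at position $j$, $A_v\cap A_w\neq\varnothing$ iff $j$ lies in the run of $v$ or $i$ lies in the run of $w$. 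No edge outside $E$ appears, since the opposite-colour positions in a run are by construction neighbours. For the converse inclusion, suppose $vw\in E$ but $w$ is not in the run of $v$; sweeping counterclockwise from $v$ one then meets an opposite-colour non-neighbour of $v$ before reaching $w$, and this non-neighbour lies strictly inside the clock-arc whose clockwise-last endpoint is $v$. The reformulated condition applied to $vw$ thus cannot hold on that arc, so it holds on the other: $w$ is adjacent to every vertex of $v$'s colour strictly inside the arc swept counterclockwise from $w$ to $v$. Therefore the run of $w$ runs uninterrupted past all those positions and reaches position $i$ itself --- a neighbour, since $vw\in E$ --- so $i$ lies in the run of $w$. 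Combined with the ``no extra edge'' direction this gives $A_v\cap A_w\neq\varnothing\iff vw\in E$, so $\{A_v\}$ is a circular-arc representation of $B$.

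\textbf{The main obstacle.} The first direction is largely bookkeeping once one commits to ordering by a single endpoint. The weight of the proof lies in the converse: making ``two arcs meet iff one contains the other's ending marker'' hold exactly with a clean choice of markers and start-points, and, more delicately, converting the surviving alternative of the reformulated condition into the precise statement ``position $i$ lies in the run of $w$'', which requires tracking the index blocks that wrap past the cyclic cut. I expect this translation --- together with pinning down the definition of ``run'' so that it simultaneously excludes non-neighbours and captures every edge --- to be the crux; the degenerate cases (whole-circle arcs, endpoints coinciding before perturbation) are routine.
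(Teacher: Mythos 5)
Your proof is correct and follows essentially the same route as the paper: necessity by ordering vertices according to the clockwise endpoints of their arcs and observing that an intersection forces one clockwise endpoint to lie in the other arc, and sufficiency by assigning each vertex the arc ending at its own marker and sweeping anticlockwise through its maximal consecutive run of opposite-colour neighbours. Your symmetric ``clockwise-last endpoint of one of the two clock-arcs'' reformulation is only a repackaging of conditions (a) and (b), so no genuinely different ideas are involved.
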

\begin{proof}
    Necessity: Let $B=(X,Y,E)$ be a circular-arc bigraph. Then there exist a circular arc $A_v$ corresponding to every vertex $v$ of the set $X\cup Y$. Such that $xy\in E$ if and only if $A_x\cap A_y\neq\phi$, for all $x\in X$ and $y\in Y$. Without loss of generality we consider that all the arcs having distinct end points. Now order the vertices of $B$ from $1$ to $n$ according to increasing order of clockwise end points (where $n$ is the order of the bigraph $B$). Let $v_1, v_2, v_3, ..., v_n$ be such an ordering. If $i^{\text{th}}$ vertex of the sequence belongs to $X$ partite set then we call the vertex as $x_i$, i.e $v_i=x_i$. Similarly, if $j^{\text{th}}$ vertex of the sequence belongs to $Y$ partite set then we call the vertex  $y_j$, i.e $v_j=y_j$. 
    \par Let $x_i$ be adjacent to $y_j$. Then, we have the following cases:
    \begin{case}\label{c1}
        ($i>j$)
    \end{case}
    \begin{itemize}

   \item Either, clockwise end point of $A_i$ (arc corresponding to $x_i$) lies within $A_j$ ( arc corresponding to $y_j$), in this case $A_k\cap A_j\neq \phi$, for all $k\in \{i+1, i+2,...,n,1,...,j-1\}$.\\
   Therefore $x_ky_j\in E$ for all possible $k\in \{i+1, i+2,...,n,1,...,j-1\}$.
   \item Or, clockwise end point of $A_j$ lies within $A_i$, in this case $A_l\cap A_i\neq\phi$, for all $l\in\{ j+1, j+2,...,i-1\}$.\\
   Therefore $x_iy_l\in E$ for all possible $l\in\{ j+1, j+2,...,i-1\}$.
    \end{itemize}
    \begin{case}\label{c2}
        ($i<j$)
    \end{case}
    \begin{itemize}
    \item Either, clockwise end point of $A_i$ (arc corresponding to $x_i$) lies within $A_j$ ( arc corresponding to $y_j$), in this case $A_k\cap A_j\neq \phi$, for all $k\in \{i+1, i+2,...,j-1\}$.\\
   Therefore $x_ky_j\in E$ for all possible $k\in \{i+1, i+2,...,j-1\}$.
   \item Or, clockwise end point of $A_j$ lies within $A_i$, in this case $A_l\cap A_i\neq\phi$, for all $l\in\{ j+1, j+2,...,n,1,...,i-1\}$.\\
   Therefore $x_iy_l\in E$ for all possible $l\in\{ j+1, j+2,...,n,1,...,i-1\}$.
    \end{itemize}
    Hence, the ordering $v_1,v_2,...,v_n$ of  vertices of the bigraph $B$ is a total-circular ordering.
    \par\noindent{}Sufficiency: Let $B=(X,Y,E)$ be a bipartite graph where the vertices are ordered as $v_1,v_2,...,v_n$ , which is a total-circular ordering.\\
    Now, we will construct a circular arc for each vertex of  the bigraph $B$. Let $k$ be the $k^{\text{th}}$ hour marker on an n-hour clock.\\
    If $v_i=x_i$ $\in X$, then draw a closed arc $A_i$ anticlockwise from $i$ to $r_i$, where $v_{r_i}$ is the last consecutive vertex of $Y$-partite set in the anticlockwise sequence $v_{i-1},v_{i-2},...,v_i$ that is adjacent to $x_i$ (i.e. $A_i=[r_i, i]$).\\
     If $v_j=y_j$ $\in Y$, then draw a closed arc $A_j$ anticlockwise from $j$ to $r_j$, where $v_{r_j}$ is the last consecutive vertex of $X$-partite set in the anticlockwise sequence $v_{j-1},v_{j-2},...,v_j$ that is adjacent to $y_j$ (i.e. $A_j=[r_j, j]$).
     \par If $x_i$ is adjacent to $y_j$, for some $x_i\in X$ and $y_j\in Y$, then we have the following cases:
     \begin{case}\label{c3}
         ($i>j$)
     \end{case}
     Then, by \cref{d1} of total-circular ordering:
     \begin{itemize}
          \item either $x_iy_k\in E$, for all possible $y_k$ ($k\in \{ j+1, j+2,...,i-2,i-1\}$), then $A_i$ contains $j$ and therefore $A_i\cap A_j\neq\phi$.
          
        \item or, $x_ly_j\in E$, for all possible $x_l$ ($l\in \{i+1,i+2,...,n,1,2,...,j-1\}$), then $A_j$ contains $i$ and therefore $A_i\cap A_j\neq\phi$.
     \end{itemize}
     \begin{case}\label{c4}
         ($i<j$)
     \end{case}
     \begin{itemize}
        \item either $x_ky_j\in E$, for all possible $x_k$ ($k\in \{ i+1, i+2,...,j-2,j-1\}$), then $A_j$ contains $i$ and therefore $A_i\cap A_j\neq\phi$.
        \item or, $x_iy_l\in E$, for all possible $y_l$ ($l\in \{j+1,j+2,...,n,1,2,...,i-1\}$), then $A_i$ contains $j$ and therefore $A_i\cap A_j\neq\phi$.
    \end{itemize}
    Therefore in any case, $x_i$ is adjacent to $y_j$ implies $A_i\cap A_j\neq\phi$.\\
    Again, let $A_i\cap A_j\neq\phi$, where $A_i$ is the circular arc corresponding to the vertex $x_i$ and $A_j$ is the circular arc corresponding to the vertex $y_j$. Then by the construction of the circular arcs it is clear that the vertex $x_i$ is adjacent to $y_j$.\\
    Thus $x_iy_j\in E$ if and only if $A_i\cap A_j\neq\phi$.
    Therefore $B=(X,Y,E)$ is a circular-arc bigraph. 
    
\end{proof}

\begin{figure}[H]{\label{f1}}
    \centering
    \begin{tikzpicture}[line cap=round,line join=round,x=1.0cm,y=1.0cm,scale=.7]
\clip(-6.,1.) rectangle (11.,9.);
\fill[line width=.7pt,color=black,fill=white,fill opacity=0.10000000149011612] (-3.4,2.56) -- (-1.4,2.56) -- (0.014213562373094568,3.9742135623730945) -- (0.014213562373095012,5.974213562373094) -- (-1.4,7.388427124746189) -- (-3.4,7.3884271247461895) -- (-4.814213562373094,5.9742135623730945) -- (-4.814213562373094,3.9742135623730954) -- cycle;
\draw [line width=.7pt] (7.,5.) circle (2.23606797749979cm);
\draw [shift={(7.,5.)},line width=.7pt]  plot[domain=0.:2.562953072305964,variable=\t]({1.*3.*cos(\t r)+0.*3.*sin(\t r)},{0.*3.*cos(\t r)+1.*3.*sin(\t r)});
\draw [shift={(7.,5.)},line width=.7pt]  plot[domain=0.9514840476604486:2.1169628902286846,variable=\t]({1.*2.480725700274014*cos(\t r)+0.*2.480725700274014*sin(\t r)},{0.*2.480725700274014*cos(\t r)+1.*2.480725700274014*sin(\t r)});
\draw [shift={(7.,5.)},line width=.7pt]  plot[domain=-0.872357789965374:0.5201245875154695,variable=\t]({1.*2.768176294963888*cos(\t r)+0.*2.768176294963888*sin(\t r)},{0.*2.768176294963888*cos(\t r)+1.*2.768176294963888*sin(\t r)});
\draw [shift={(7.,5.)},line width=.7pt]  plot[domain=0.29145679447786715:1.4016951007935197,variable=\t]({1.*3.3408980828513757*cos(\t r)+0.*3.3408980828513757*sin(\t r)},{0.*3.3408980828513757*cos(\t r)+1.*3.3408980828513757*sin(\t r)});
\draw [shift={(7.,5.)},line width=.7pt]  plot[domain=4.536154282648014:5.73369576085061,variable=\t]({1.*2.96593998590666*cos(\t r)+0.*2.96593998590666*sin(\t r)},{0.*2.96593998590666*cos(\t r)+1.*2.96593998590666*sin(\t r)});
\draw [shift={(7.,5.)},line width=.7pt]  plot[domain=3.8417705860151123:5.0318822022723415,variable=\t]({1.*2.824535360019414*cos(\t r)+0.*2.824535360019414*sin(\t r)},{0.*2.824535360019414*cos(\t r)+1.*2.824535360019414*sin(\t r)});
\draw [shift={(7.,5.)},line width=.7pt]  plot[domain=3.0435473762959537:4.1623146077095,variable=\t]({1.*2.451774867315513*cos(\t r)+0.*2.451774867315513*sin(\t r)},{0.*2.451774867315513*cos(\t r)+1.*2.451774867315513*sin(\t r)});
\draw [shift={(7.,5.)},line width=.7pt]  plot[domain=2.3220197665761497:3.4141405272160372,variable=\t]({1.*3.3111931384321265*cos(\t r)+0.*3.3111931384321265*sin(\t r)},{0.*3.3111931384321265*cos(\t r)+1.*3.3111931384321265*sin(\t r)});
\draw [line width=.7pt,color=black] (-3.4,2.56)-- (-1.4,2.56);
\draw [line width=.7pt,color=black] (-1.4,2.56)-- (0.014213562373094568,3.9742135623730945);
\draw [line width=.7pt,color=black] (0.014213562373094568,3.9742135623730945)-- (0.014213562373095012,5.974213562373094);
\draw [line width=.7pt,color=black] (0.014213562373095012,5.974213562373094)-- (-1.4,7.388427124746189);
\draw [line width=.7pt,color=black] (-1.4,7.388427124746189)-- (-3.4,7.3884271247461895);
\draw [line width=.7pt,color=black] (-3.4,7.3884271247461895)-- (-4.814213562373094,5.9742135623730945);
\draw [line width=.7pt,color=black] (-4.814213562373094,5.9742135623730945)-- (-4.814213562373094,3.9742135623730954);
\draw [line width=.7pt,color=black] (-4.814213562373094,3.9742135623730954)-- (-3.4,2.56);
\draw [line width=.7pt] (-3.4,7.3884271247461895)-- (0.014213562373094568,3.9742135623730945);
\draw (-3.7,8.2) node[anchor=north west] {$y_3$};
\draw (-1.7,8.2) node[anchor=north west] {$x_1$};
\draw (0.2,6.32) node[anchor=north west] {$y_2$};
\draw (0.2,4.36) node[anchor=north west] {$x_4$};
\draw (-1.7,2.58) node[anchor=north west] {$y_5$};
\draw (-3.7,2.62) node[anchor=north west] {$x_6$};
\draw (-5.9,4.38) node[anchor=north west] {$y_7$};
\draw (-5.9,6.44) node[anchor=north west] {$x_8$};
\draw (7.8,7.8) node[anchor=north west] {$x_1$};
\draw (10.2,6.5) node[anchor=north west] {$y_2$};
\draw (10.0,5.5) node[anchor=north west] {$y_3$};
\draw (8.2,3.6) node[anchor=north west] {$x_4$};
\draw (6.2,2.1) node[anchor=north west] {$y_5$};
\draw (4.1,3.3) node[anchor=north west] {$x_6$};
\draw (3.7,5.64) node[anchor=north west] {$y_7$};
\draw (3.7,7.9) node[anchor=north west] {$x_8$};
\begin{scriptsize}
\draw [fill=black] (-3.4,2.56) circle (2.5pt);
\draw [fill=black] (-1.4,2.56) circle (2.5pt);
\draw [fill=black] (0.014213562373094568,3.9742135623730945) circle (2.5pt);
\draw [fill=black] (0.014213562373095012,5.974213562373094) circle (2.5pt);
\draw [fill=black] (-1.4,7.388427124746189) circle (2.5pt);
\draw [fill=black] (-3.4,7.3884271247461895) circle (2.5pt);
\draw [fill=black] (-4.814213562373094,5.9742135623730945) circle (2.5pt);
\draw [fill=black] (-4.814213562373094,3.9742135623730954) circle (2.5pt);
\end{scriptsize}
\end{tikzpicture}
    \caption{A circular-arc bigraph where a total-circular ordering of the vertices is: $x_1$, $y_2$, $y_3$, $x_4$, $y_5$, $x_6$, $y_7$, $x_8$.}
    \label{fig:enter-label-1}
\end{figure}
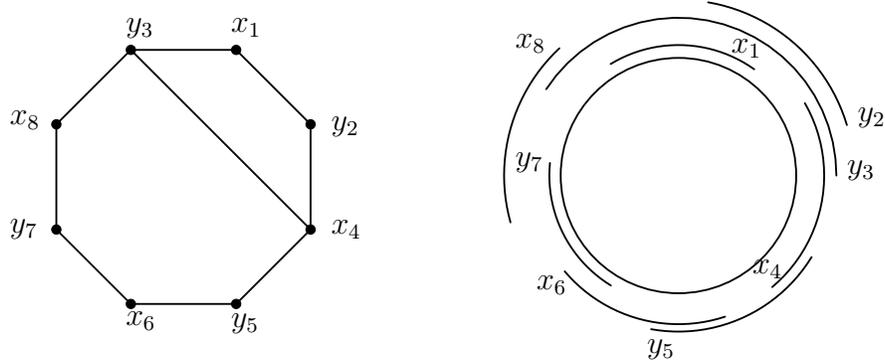
If we calculate the circular arcs of the given graph corresponding to the total-circular ordering of the vertices as shown in Figure 1, we get the following arcs: $A_{x_1}=[1,1]$, $A_{y_2}=[1,2]$, $A_{y_3}=[8,3]$, $A_{x_4}=[2,4]$, $A_{y_5}=[4,5]$, $A_{x_6}=[5,6]$, $A_{y_7}=[6,7]$, and  $A_{x_8}=[7,8]$.\vspace{.3cm}
\par We now introduce another vertex ordering for bigraphs, referred to as bi-circular ordering. Utilizing this ordering, we characterize the class of circular arc bigraphs.\\
Let $B=(X,Y,E)$ be a bigraph of order $n$. Let $v_1$, $v_2$,..., $v_n$ be an ordering of the vertex set $X\cup Y$ of $B$. These vertices are then placed on an $n$-hour clock such that the $i$-th vertex is placed on the $i$-th hour marker. Let $A$ denote the biadjacency matrix of the bigraph $B$, where the vertices in the rows and columns are arranged according to the increasing order of indices of vertices in $X$ and $Y$ partite sets respectively (see Figure 3).\\
Now, consider the row of $A$, corresponding to the vertex $x_i$.
We define the set $W_i$ as the collection of $1$’s in the $x_i$-th row that appear consecutively, starting from the column $y_{m_i}$, where $y_{m_i}$ is the first vertex of the $Y$ partite set in the anticlockwise direction from $x_i$, that is adjacent to $x_i$. Note that $W_i=\phi$, if there exists no such vertex of $Y$. The sequence continues leftward (and wraps around if possible) until a 0 is encountered.\\ 
Similarly, consider the column of $A$, corresponding to the vertex $y_j$. We define the set $W_j$ as the collection of $1$’s in the $y_j$-th column that appear consecutively, starting from the row $x_{m_j}$, where $x_{m_j}$ is the first vertex of the $X$ partite set in the anticlockwise direction from $y_j$, that is adjacent to $y_j$. Note that $W_j=\phi$, if there exists no such vertex of $X$. The sequence continues upward (and wraps around if possible) until a 0 is encountered.\\ 
Then such an ordering of the vertices $X\cup Y$ of $B$ is called a \textit{bi-circular ordering} if the sets $W_i$'s and $W_j$'s collectively contain all the $1$’s of the biadjacency matrix $A$.

\begin{theo}\label{t4}
    \textit{A bigraph $B=(X,Y,E)$  is a circular-arc bigraph if and only if there exist a bi-circular ordering of the vertices of $B$.} 
\end{theo}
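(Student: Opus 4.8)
My plan is to handle the two implications separately, with the ``only if'' direction riding on \cref{t3} and the ``if'' direction done by a direct arc construction.

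For necessity, suppose $B$ is a circular-arc bigraph. By \cref{t3}, $X\cup Y$ admits a total-circular ordering $v_1,\dots,v_n$, and I claim this very ordering is already bi-circular. Take an edge $x_iy_j$ with, say, $i>j$. By \cref{d1}, either $x_iy_k\in E$ for every $Y$-vertex $y_k$ with $j<k<i$, or $x_ly_j\in E$ for every $X$-vertex $x_l$ with $l\in\{i+1,\dots,n,1,\dots,j-1\}$. In the first case all $Y$-vertices sitting between the $j$-th and $i$-th markers are neighbours of $x_i$, so walking anticlockwise from the $i$-th marker the run of consecutive $Y$-neighbours that defines $W_i$ extends all the way down to $y_j$; hence the $1$ at $(x_i,y_j)$ lies in $W_i$. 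In the second case, symmetrically, all $X$-vertices between the $i$-th and $j$-th markers (going the long way round) are neighbours of $y_j$, so $x_i$ lies in the run $W_j$. The case $i<j$ is identical using part (b) of \cref{d1}. Thus every $1$ of the biadjacency matrix is captured by some $W_i$ or $W_j$, i.e.\ the ordering is bi-circular. (Equivalently one may read this off a representation directly: order the vertices by increasing clockwise endpoint; for an edge $x_iy_j$ the two arcs meet, so one contains the clockwise endpoint of the other, and since $A_{x_i}$ is an interval of the host circle with clockwise endpoint at the $i$-th marker, the alternative ``$A_{x_i}$ contains the clockwise endpoint of $A_{y_j}$'' forces every $Y$-vertex between the $j$-th and $i$-th markers to be adjacent to $x_i$, placing $y_j$ in $W_i$; the other alternative places $x_i$ in $W_j$.)

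For sufficiency, suppose $v_1,\dots,v_n$ is a bi-circular ordering, and put $v_k$ at the $k$-th hour marker. Following the recipe of \cref{t3}, for $v_i=x_i$ with $W_i\neq\emptyset$ let $A_{x_i}$ be the closed arc whose clockwise endpoint is the $i$-th marker and which runs anticlockwise to the far end of the run $W_i$, and when $W_i=\emptyset$ (so $x_i$ is isolated) let $A_{x_i}$ be a tiny arc covering only the $i$-th marker; define $A_{y_j}$ symmetrically. If $x_iy_j\in E$, then by the bi-circular hypothesis the corresponding $1$ lies in $W_i$ or in $W_j$; in the former case $A_{x_i}$ sweeps over the $j$-th marker, which is the clockwise endpoint of $A_{y_j}$, while in the latter case both $A_{x_i}$ and $A_{y_j}$ contain the $i$-th marker. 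Either way $A_{x_i}\cap A_{y_j}\neq\emptyset$, so the forward implication of the realization is immediate.

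What remains — and what I expect to carry the whole difficulty — is the converse: $x_iy_j\notin E$ must force $A_{x_i}\cap A_{y_j}=\emptyset$. A verbatim copy of the \cref{t3} construction is not quite enough here, because of an \emph{overhang}: the arc $A_{x_i}$ reaches from the $i$-th marker back past the ``gap'' of vertices lying strictly between $x_i$ and its first anticlockwise $Y$-neighbour $y_{m_i}$, and if that gap happens to contain a $Y$-vertex $y_k$ not adjacent to $x_i$, then $A_{x_i}$ and $A_{y_k}$ share the $k$-th marker and the construction manufactures a false edge (one can write down small explicit bi-circular orderings where this actually occurs). So the construction must be tuned so that only the intended overlaps survive — for instance by shrinking each arc to the least interval that still meets every partner certifying an edge at it, perturbing the marker positions so that the gap markers are never swept, and scattering the arcs of isolated vertices away from everything — and then rechecking both implications to conclude that $\{A_v\}$ realizes $B$, hence $B$ is a circular-arc bigraph. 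The necessity direction and the forward half of sufficiency are routine; this no-spurious-intersection step is where the real content of the theorem sits.
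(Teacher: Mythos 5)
Your necessity argument and the forward half of sufficiency are correct and match the paper's proof in substance (the paper reads the ordering off the arc model directly rather than routing through \cref{t3}, but the ordering and the case analysis are the same). The genuine gap is exactly where you yourself locate the difficulty: the implication $A_i\cap A_j\neq\emptyset\Rightarrow x_iy_j\in E$ is never proved. You diagnose an ``overhang'' obstruction and then list candidate repairs --- shrinking arcs, perturbing marker positions, relocating isolated vertices --- without carrying any of them out or re-verifying the forward implication afterwards; as written, the proof of the converse simply stops. The repairs are not innocuous either: if you shrink $A_{x_i}$ so that it no longer sweeps the markers lying between $y_{m_i}$ and $x_i$, it need no longer contain the $i$-th marker, which is precisely what your forward implication uses when the edge $x_iy_j$ is certified by $W_j$ rather than by $W_i$.

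The overhang itself is an artifact of reading $y_{m_i}$ as ``the first $Y$-vertex anticlockwise from $x_i$ \emph{that happens to be adjacent} to $x_i$.'' The reading under which the construction is sound (and which the paper's proof tacitly uses) is that $y_{m_i}$ is the $Y$-vertex immediately preceding $x_i$ in the anticlockwise direction, with $W_i=\emptyset$ whenever that vertex is a non-neighbour; the necessity argument goes through unchanged under this reading. With it there is no gap: every $Y$-marker covered by $A_i=[p_i,i]$ has its index in the circular interval from $p_i$ to $m_i$, i.e.\ is a column of $W_i$, hence a neighbour of $x_i$, and symmetrically for $A_j=[q_j,j]$. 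Since two arcs of this form can only intersect if one contains the clockwise endpoint ($i$ or $j$) of the other, $A_i\cap A_j\neq\emptyset$ forces a $1$ at position $(x_i,y_j)$ inside $W_i$ or $W_j$, and the converse closes in two lines --- this is the paper's argument. You should either adopt this reading and finish the proof that way, or, if you insist on the weaker reading, actually exhibit and verify a corrected construction; your own observation that the naive arcs then realize a proper supergraph of $B$ shows that under that reading the theorem itself, not just the proof, would need to be re-examined.
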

\begin{proof}
    Necessity: Let $B=(X,Y,E)$ be a circular-arc bigraph. Then there exists a circular arc model $\{A_v: v\in X\cup Y\}$, such that $xy\in E$ if and only if $A_x\cap A_y\neq\phi$ for all $x\in X$ and $y\in Y$. Without loss of generality, we may assume that the circular arc model is chosen so that (1) none of its arcs equals to the whole circle, (2) the arcs are closed (i.e., they contain their endpoints), and (3) no two arcs have a common clockwise end point. \\
    Now label the vertices of $B$ from $1$ to $n$ according to increasing order of their clockwise end points, and arrange the rows and columns of the biadjacency matrix of $B$ according to the increasing order of their indices of the vertices of $X$ and $Y$ partite sets respectively (for example, see Figure 3). We claim that with this arrangement of the biadjacency matrix, $W_i$'s contain all the 1's.
    \par Let the \((x_i, y_j)\) position of the biadjacency matrix contain a \(1\), which implies that $x_i$ is adjacent to $y_j$. If \(i > j\), then based on the ordering of the vertices of \(B\), one of the following must hold:
\begin{itemize}
    \item  \(x_i y_k \in E\) for all possible \(k \in \{j + 1, j + 2, \dots, i - 1\}\). In this case, the \(1\) at position \((x_i, y_j)\) in the biadjacency matrix of \(B\) must be contained in \(W_i\).

\item \(x_l y_j \in E\) for all possible \(l \in \{i + 1, i + 2, \dots, n, 1, \dots, j - 1\}\). In this case, the \(1\) at position \((x_i, y_j)\) in the biadjacency matrix of \(B\) must be contained in \(W_j\).
\end{itemize}
Similarly, if \(i < j\), a parallel argument shows that the \(1\) at position \((x_i, y_j)\) must be contained in either \(W_i\) or \(W_j\).

Thus, in every case, either \(W_i\) or \(W_j\) must contain the \(1\) at position \((x_i, y_j)\) in the biadjacency matrix of the bigraph \(B\). Therefore, the sets \(W_i\) \((1 \leq i \leq n)\) collectively contain all the \(1\)'s in the biadjacency matrix.

    \par Sufficiency: 
Consider a bigraph $B=(X,Y,E)$, where the vertex set $X\cup Y$ of $B$ is ordered as $v_1$, $v_2$,...,$v_n$. Next we place the vertices on an $n$-hour clock, where the $i$-th vertex is placed on the $i$-th hour marker, and assume this ordering ensures that $W_i$'s contain all the $1$'s of the biadjacency matrix of $B$.\\
 If the $i$-th vertex of the ordering is $x_i$, then it will represent a row of the biadjacency matrix and similarly, if $j$-th vertex is $y_j$ it will represent a column.  \\
Let the set $W_i$ starts from the position $(x_i,y_{m_i})$ of the biadjacency matrix of $B$ and continues leftward (and around if possible until a zero is encountered), extending up to the position $(x_i,y_{p_i})$. Then, draw an arc $A_i$ in the clockwise direction starting from $p_i$ and extending up to $i$ on the n-hour clock and, associate this arc with the vertex $x_i$.\\
Let the set $W_j$ starts from the position $(x_{m_j},y_{j})$ of the biadjacency matrix of $B$ and continues upward (and around if possible until a zero is encountered), extending up to the position $(x_{q_j},y_{j})$. Then, draw an arc $A_j$ in the clockwise direction starting from $q_j$ and extending up to $j$ on the n-hour clock and, associate this arc with the vertex $y_j$.\\
Let us assume that  $x_iy_j\in E$. Then, in the biadjacency matrix, the position  $(x_i,y_j)$ is a $1$. Consequently, either $W_i$ or $W_j$ contains this $1$. If $W_i$ contains this $1$, then the arc $A_i$ contains $j$, Alternatively,  if $ W_j$ contains this $1$, then the arc $A_j$ contains $i$. Therefore, in either case, $A_i\cap A_j\neq \phi$.\\
Conversely, let  $A_i\cap A_j\neq \phi$. Then, either the arc $A_i$ contains the clockwise end point of $A_j$,  or the arc $A_j$ contains the clockwise end point of $A_i$. This means either $A_i$ contains $j$ or $A_j$ contains $i$.\\
If $A_i$ contains $j$, then $W_i$  will include the position 
$(x_i,y_j)$ of the biadjacency matrix. Consequently, the position $(x_i,y_j)$ of the biadjacency matrix will be 1, implying that $x_iy_j\in E$.\\
Similarly, if $A_j$ contains $i$, then $W_j$  will include the position 
$(x_i,y_j)$ in the biadjacency matrix of $B$. Consequently, the position $(x_i,y_j)$ in the biadjacency matrix will be 1, implying that $x_iy_j\in E$. Hence, $x_i$ is adjacent to $y_j$ if and only if $A_i\cap A_j\neq\phi$.\\
Therefore,  $B=(X,Y,E)$ is a circular-arc bigraph.
\end{proof}

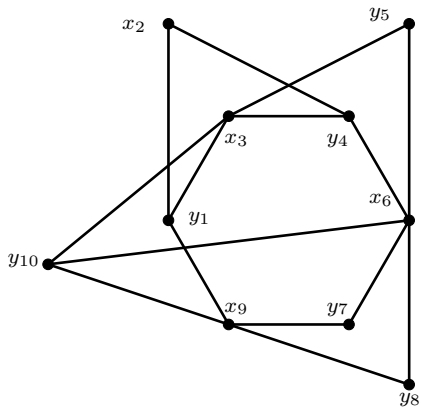
\begin{figure}[H]
    \centering
    \begin{tikzpicture}[line cap=round,line join=round,x=1.0cm,y=1.0cm,scale=.8]
\clip(2.,1.) rectangle (10.,9.);
\fill[line width=1.pt,color=black,fill=white,fill opacity=0.10000000149011612] (6.,3.) -- (8.,3.) -- (9.,4.732050807568878) -- (8.,6.464101615137755) -- (6.,6.464101615137755) -- (5.,4.7320508075688785) -- cycle;
\draw [line width=1.pt,color=black] (6.,3.)-- (8.,3.);
\draw [line width=1.pt,color=black] (8.,3.)-- (9.,4.732050807568878);
\draw [line width=1.pt,color=black] (9.,4.732050807568878)-- (8.,6.464101615137755);
\draw [line width=1.pt,color=black] (8.,6.464101615137755)-- (6.,6.464101615137755);
\draw [line width=1.pt,color=black] (6.,6.464101615137755)-- (5.,4.7320508075688785);
\draw [line width=1.pt,color=black] (5.,4.7320508075688785)-- (6.,3.);
\draw [line width=1.pt] (9.,8.)-- (6.,6.464101615137755);
\draw [line width=1.pt] (9.,8.)-- (9.,4.732050807568878);
\draw [line width=1.pt] (6.,3.)-- (9.,2.);
\draw [line width=1.pt] (9.,4.732050807568878)-- (9.,2.);
\draw [line width=1.pt] (6.,6.464101615137755)-- (3.,4.);
\draw [line width=1.pt] (3.,4.)-- (6.,3.);
\draw [line width=1.pt] (3.,4.)-- (9.,4.732050807568878);
\draw [line width=1.pt] (5.,4.7320508075688785)-- (5.,8.);
\draw [line width=1.pt] (5.,8.)-- (8.,6.464101615137755);
\begin{scriptsize}
\draw [fill=black] (6.,3.) circle (2.5pt);
\draw [fill=black] (8.,3.) circle (2.5pt);
\draw [fill=black] (9.,4.732050807568878) circle (2.5pt);
\draw [fill=black] (8.,6.464101615137755) circle (2.5pt);
\draw [fill=black] (6.,6.464101615137755) circle (2.5pt);
\draw [fill=black] (5.,4.7320508075688785) circle (2.5pt);
\draw [fill=black] (9.,8.) circle (2.5pt);
\draw [fill=black] (9.,2.) circle (2.5pt);
\draw[fill=black] (3.,4.) circle (2.5pt);
\draw [fill=black] (5.,8.) circle (2.5pt);

\draw (4.1,8.2) node[anchor=north west] {$x_2$};
\draw (8.2,8.4) node[anchor=north west] {$y_5$};
\draw (5.2,5) node[anchor=north west] {$y_1$};
\draw (5.8,6.3) node[anchor=north west] {$x_3$};
\draw (7.5,6.3) node[anchor=north west] {$y_4$};
\draw (8.2,5.3) node[anchor=north west] {$x_6$};
\draw (7.5,3.5) node[anchor=north west] {$y_7$};
\draw (5.8,3.5) node[anchor=north west] {$x_9$};
\draw (2.2,4.3) node[anchor=north west] {$y_{10}$};
\draw (8.7,2.) node[anchor=north west] {$y_8$};

\end{scriptsize}
\end{tikzpicture}
    \caption{A bigraph having an ordering of its vertices: $y_1$, $x_2$, $x_3$, $y_4$, $y_5$, $x_6$, $y_7$, $y_8$, $x_9$, $y_{10}$. It is a bi-circular ordering as shown in the next figure.}
    \label{fig:enter-label 10}
\end{figure}

\begin{figure}[H]
    \centering

    \begin{tikzpicture}[line cap=round,line join=round,x=1cm,y=1cm,scale=1.5]
\clip(1.5,1.5) rectangle (8.1,6.5);
\draw [line width=1.pt] (2.,6.)-- (8.,6.);
\draw [line width=1.pt] (2.,6.)-- (2.,2.);
\draw [line width=1.pt] (2.,2.)-- (8.,2.);
\draw [line width=1.pt] (8.,6.)-- (8.,2.);
\draw [line width=1.pt] (3.,6.)-- (3.,2.);
\draw [line width=1.pt] (4.,6.)-- (4.,2.);
\draw [line width=1.pt] (5.,6.)-- (5.,2.);
\draw [line width=1.pt] (6.,6.)-- (6.,2.);
\draw [line width=1.pt] (7.,6.)-- (7.,2.);
\draw [line width=1.pt] (2.,5.)-- (8.,5.);
\draw [line width=1.pt] (2.,4.)-- (8.,4.);
\draw [line width=1.pt] (2.,3.)-- (8.,3.);
\draw (2.3,5.8) node[anchor=north west] {$1$};
\draw (3.3,5.8) node[anchor=north west] {$1$};
\draw (4.3,5.8) node[anchor=north west] {$0$};
\draw (5.3,5.8) node[anchor=north west] {$0$};
\draw (6.3,5.8) node[anchor=north west] {$0$};
\draw (7.2,5.8) node[anchor=north west] {$0$};
\draw (2.3,4.7) node[anchor=north west] {$1$};
\draw (3.3,4.8) node[anchor=north west] {$1$};
\draw (4.3,4.8) node[anchor=north west] {$1$};
\draw (5.3,4.8) node[anchor=north west] {$0$};
\draw (6.3,4.8) node[anchor=north west] {$0$};
\draw (7.2,4.6) node[anchor=north west] {$1$};
\draw (2.3,3.8) node[anchor=north west] {$0$};
\draw (3.3,3.8) node[anchor=north west] {$1$};
\draw (4.3,3.8) node[anchor=north west] {$1$};
\draw (5.1,3.8) node[anchor=north west] {$1$};
\draw (6.2,3.8) node[anchor=north west] {$1$};
\draw (7.2,3.8) node[anchor=north west] {$1$};
\draw (2.3,2.8) node[anchor=north west] {$1$};
\draw (3.3,2.8) node[anchor=north west] {$0$};
\draw (4.3,2.8) node[anchor=north west] {$0$};
\draw (5.3,2.8) node[anchor=north west] {$1$};
\draw (6.2,2.8) node[anchor=north west] {$1$};
\draw (7.2,2.8) node[anchor=north west] {$1$};
\draw (2.3,6.5) node[anchor=north west] {$y_1$};
\draw (3.3,6.5) node[anchor=north west] {$y_4$};
\draw (4.3,6.5) node[anchor=north west] {$y_5$};
\draw (5.3,6.5) node[anchor=north west] {$y_7$};
\draw (6.3,6.5) node[anchor=north west] {$y_8$};
\draw (7.2,6.5) node[anchor=north west] {$y_{10}$};
\draw (1.5,5.6) node[anchor=north west] {$x_2$};
\draw (1.5,4.6) node[anchor=north west] {$x_3$};
\draw (1.5,3.6) node[anchor=north west] {$x_6$};
\draw (1.5,2.6) node[anchor=north west] {$x_9$};

\draw (2.4,5.3) node[anchor=north west,scale=.8] {$W_2$};
\draw (2.2,6.1) node[anchor=north west,scale=2] {$\leftarrow$};

\draw (3.5,4.4) node[anchor=north west,scale=.8] {$W_4$};
\draw (3.5,5) node[anchor=north west,scale=2] {$\uparrow$};
\draw (4.5,3.4) node[anchor=north west,scale=.8] {$W_6$};
\draw (3.8,3.5) node[anchor=north west,scale=2] {$\leftarrow$};
\draw (4.5,5) node[anchor=north west,scale=2] {$\uparrow$};
\draw (4.6,4.4) node[anchor=north west,scale=.8] {$W_5$};
\draw (5.5,3.5) node[anchor=north west,scale=.8] {$W_7$};
\draw (5.5,4.2) node[anchor=north west,scale=2] {$\uparrow$};
\draw (6.5,3.5) node[anchor=north west,scale=.8] {$W_8$};
\draw (6.5,4.2) node[anchor=north west,scale=2] {$\uparrow$};
\draw (7.5,2.5) node[anchor=north west,scale=.8] {$W_{10}$};
\draw (7.5,3.2) node[anchor=north west,scale=2] {$\uparrow$};
\draw (6.5,2.5) node[anchor=north west,scale=.8] {$W_9$};
\draw (6.,2.4) node[anchor=north west,scale=2] {$\leftarrow$};
\draw (2.6,2.5) node[anchor=north west,scale=.8] {$W_1$};
\draw (2.6,4.3) node[anchor=north west,scale=.8] {$W_3$};
\draw (7.6,4.5) node[anchor=north west,scale=.8] {$W_3$};
\draw (7.5,5.1) node[anchor=north west,scale=2] {$\leftarrow$};
\draw (2.,5.1) node[anchor=north west,scale=2] {$\leftarrow$};
\draw (2.5,3.2) node[anchor=north west,scale=2] {$\uparrow$};

\draw [rotate around={3.6926304246035833:(2.4952513105674035,5.600415464175756)},line width=1.pt] (2.4952513105674035,5.600415464175756) ellipse (0.45989336694701877cm and 0.17910190566759865cm);
\draw [rotate around={88.65021006226398:(3.4641754996789644,5.087458383845753)},line width=1.pt] (3.4641754996789644,5.087458383845753) ellipse (0.8544924032491376cm and 0.2638682151235102cm);
\draw [rotate around={-84.59603096336019:(4.434560058552211,4.479870230574548)},line width=1.pt] (4.434560058552211,4.479870230574548) ellipse (0.36985828967970974cm and 0.18475291725939152cm);
\draw [rotate around={0.3670793417873507:(3.8599342672833945,3.588604687970298)},line width=1.pt] (3.8599342672833945,3.588604687970298) ellipse (0.9814755214657788cm and 0.29998671025002643cm);
\draw [rotate around={-79.72449639411053:(5.331306818138043,3.5300876178691634)},line width=1.pt] (5.331306818138043,3.5300876178691634) ellipse (0.36786574989662096cm and 0.1813151432771058cm);
\draw [rotate around={-78.48302149308454:(6.372475636264346,3.5391230185683065)},line width=1.pt] (6.372475636264346,3.5391230185683065) ellipse (0.35609697816325675cm and 0.15904292921167085cm);
\draw [rotate around={-76.24629305081747:(2.4339917705901803,2.556635943100596)},line width=1.pt] (2.4339917705901803,2.556635943100596) ellipse (0.3274502313394834cm and 0.1908896504224311cm);
\draw [rotate around={-1.703603336971033:(5.879044750448265,2.5694693459677054)},line width=1.pt] (5.879044750448265,2.5694693459677054) ellipse (0.8825329803410645cm and 0.2646622120096985cm);
\draw [rotate around={-88.24799907488993:(7.346262176392747,3.6064930042888994)},line width=1.pt] (7.346262176392747,3.6064930042888994) ellipse (1.3007848457060887cm and 0.2490532656120195cm);
\draw [rotate around={2.334953890183946:(8.0588258421206,4.455395969862581)},line width=1.pt] (8.0588258421206,4.455395969862581) ellipse (1.2986416254752358cm and 0.3214905846074233cm);
\draw [rotate around={0.9557274781593185:(1.6456091685234757,4.458393043263531)},line width=1.pt] (1.6456091685234757,4.458393043263531) ellipse (1.2423112706262163cm and 0.3399681860271004cm);
\end{tikzpicture}
    \caption{The biadjacency matrix of the bigraph in Figure 2, where the rows and columns are arranged according to the increasing order of their indices, and corresponding $W_i$'s and $W_j$'s.}
    \label{fig:enter-label 11}
\end{figure}
\noindent{}The circular-arc representation of the bigraph of Figure 2 is the following: $A_{y_1}=[9,1]$, $A_{x_2}=[1,2]$, $A_{x_3}=[10,3]$, $A_{y_4}=[2,4]$, $A_{y_5}=[3,5]$, $A_{x_6}=[4,6]$, $A_{y_7}=[6,7]$, $A_{y_8}=[6,8]$, $A_{x_9}=[7,9]$ and, $A_{y_{10}}=[3,10]$.\\

 \par Pavol Hell and Jing Huang \cite{hell} characterized interval bigraphs using forbidden patterns with respect to a specific vertex ordering in the following theorem.
\begin{theo}[\cite{hell}]
   \textit{ Let $H$ be a bipartite graph with bipartition $(X,Y)$. Then the following statements are equivalent}:
    \begin{itemize}
        \item\textit{ $H$ is an interval bigraph};
        \item\textit{ the vertices of $H$ can be ordered $v_1$, $v_2$, ..., $v_n$, so that there do not exist $a<b<c$ in the configurations in Figure 4. (Black vertices are in $X$, red vertices in $Y$, or conversely, and all edges not shown are absent.)}
        \item \textit{the vertices of $H$ can be ordered $v_1$, $v_2$, ..., $v_n$, so that there do not exist $a<b<c<d$ in the configurations in Figure 5}.
    \end{itemize}
\end{theo}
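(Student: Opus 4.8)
The statement is an equivalence of three conditions, so the plan is to close a cycle of implications: first I would prove that $H$ is an interval bigraph if and only if its vertices admit an ordering avoiding all the triple configurations of Figure~4, and then that the triple characterization (Figure~4) and the quadruple characterization (Figure~5) are equivalent. The first equivalence is the linear counterpart of Theorems~\ref{t3} and~\ref{t4} of this paper --- an interval representation on a line in place of a circular-arc representation on a circle, and a configuration-avoiding ordering in place of a total-circular or bi-circular ordering --- so I would split it into ``necessity'' and ``sufficiency'' exactly as there.

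For necessity, assume $H$ has an interval representation $\{I_v : v\in X\cup Y\}$, chosen with all $2n$ endpoints distinct, and order the vertices $v_1,\dots,v_n$ by increasing right endpoint. Writing $\rho_v$ for the right endpoint of $I_v$, I would check that no forbidden triple can occur. The essential configuration is the ``umbrella'': a triple $a<b<c$ with $v_av_c\in E$, $v_bv_c\notin E$, and $v_a,v_b$ in a common part (so $v_c$ in the other). Then $\rho_a<\rho_b<\rho_c$; since $v_av_c\in E$ and $\rho_a<\rho_c$, the left endpoint of $I_c$ is at most $\rho_a$, hence at most $\rho_b$, so $\rho_b\in I_b\cap I_c$ and $v_bv_c\in E$ --- a contradiction. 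The colour-reversed umbrella and the quadruple configurations of Figure~5 would be ruled out by the same endpoint bookkeeping; this is a short finite check.

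For sufficiency, assume $v_1,\dots,v_n$ avoids the configurations of Figure~4. To each $v_i$ I would assign the closed interval $I_i=[\ell_i,i]$, where $\ell_i$ is the least index of a vertex of the opposite part that precedes $v_i$ and is adjacent to it, and $\ell_i=i$ if there is none --- the line-version of the arcs $[r_i,i]$ built in the sufficiency proof of Theorem~\ref{t3}. The key point is that configuration-avoidance makes each such ``backward block'' gapless: if $v_i\in X$, $v_j\in Y$ and $\ell_j\le k<j$ with $v_k\in X$, then $a=\ell_j<b=k<c=j$ is an umbrella unless $v_kv_j\in E$, so in fact every $X$-vertex with index in $[\ell_j,j)$ is adjacent to $v_j$. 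Granting this, I would verify $v_iv_j\in E \iff I_i\cap I_j\neq\emptyset$ for $v_i\in X$, $v_j\in Y$: if $v_iv_j\in E$ and (say) $i<j$ then $\ell_j\le i$ by definition, so $i\in I_i\cap I_j$; conversely $I_i\cap I_j\neq\emptyset$ forces $\ell_j\le i$, and gaplessness then gives $v_iv_j\in E$. Intersections between two vertices of the same part are irrelevant, since $H$ is the intersection \emph{bi}graph. Hence $H$ is an interval bigraph.

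For the equivalence of the Figure~4 and Figure~5 characterizations I would show both are equivalent to ``$H$ is an interval bigraph'': the necessity argument above extends verbatim to the quadruple configurations, and for sufficiency one checks that avoiding the Figure~5 configurations still forces the backward blocks to be gapless --- this may require locating, for each triple obstruction, a fourth vertex that completes it to a Figure~5 obstruction --- or, alternatively, one gives a direct pattern-to-pattern translation at the level of orderings. I expect the main obstacle to be exactly this gaplessness step together with its case analysis: confirming that the configurations actually drawn in Figures~4 and~5 are precisely the ones that obstruct the interval construction, and dispatching all the part/colour alternatives. The surrounding computations are routine interval arithmetic, entirely parallel to the proofs of Theorems~\ref{t3} and~\ref{t4}.
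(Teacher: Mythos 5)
This statement is quoted from Hell and Huang \cite{hell} and the paper gives no proof of it --- it appears only as motivation for the authors' own Theorem on circular-arc bigraphs --- so there is nothing in the paper to compare your argument against; I can only assess it on its own terms.

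Your treatment of the equivalence between the first two items is essentially correct and is indeed the ``linear'' analogue of the paper's Theorems \ref{t3} and \ref{t4}: ordering by right endpoints kills the umbrella pattern of Figure 4, and conversely the backward intervals $I_i=[\ell_i,i]$ together with your gaplessness observation yield an interval representation. The genuine gap is the third item. You dispose of it with ``one checks that avoiding the Figure~5 configurations still forces the backward blocks to be gapless --- this may require locating, for each triple obstruction, a fourth vertex that completes it to a Figure~5 obstruction --- or, alternatively, one gives a direct pattern-to-pattern translation.'' That is not an argument, and the step it skips is the substantive content of the theorem: an ordering avoiding the four-vertex patterns need not avoid the three-vertex pattern (a single umbrella $a<b<c$ with no completing fourth vertex is not excluded by Figure 5), so gaplessness of the backward blocks genuinely fails for such an ordering and your interval construction breaks. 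In Hell and Huang's paper the two orderings are different orderings, and the equivalence is established by a detour through other characterizations (two-clique circular-arc graphs and an associated edge/arc structure), not by completing triples to quadruples. So as written, the implication from the Figure~5 condition back to ``$H$ is an interval bigraph'' is unproved, and the proposed repair strategies are not known to work; only the cycle (interval bigraph) $\Rightarrow$ (Figure 4 ordering) $\Rightarrow$ (interval bigraph) and (interval bigraph) $\Rightarrow$ (Figure 5 ordering) are actually established by your sketch.
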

\begin{figure}[H]
    \centering
    \begin{tikzpicture}[line cap=round,line join=round,x=1.0cm,y=1.0cm,scale=.8]
\clip(1.,-1) rectangle (7.,3.);
\draw [shift={(4.,-1.)},line width=1.pt]  plot[domain=0.7853981633974483:2.356194490192345,variable=\t]({1.*2.8284271247461903*cos(\t r)+0.*2.8284271247461903*sin(\t r)},{0.*2.8284271247461903*cos(\t r)+1.*2.8284271247461903*sin(\t r)});
\begin{scriptsize}
\draw [fill=black] (2.,1.) circle (3.5pt);
\draw [fill=black] (4.,1.) circle (3.5pt);
\draw [fill=red] (6.,1.) circle (3.5pt);
\draw (1.5,.8) node[anchor=north west,scale=1.5] {$v_a$};
\draw (3.5,.8) node[anchor=north west,scale=1.5] {$v_b$};
\draw (5.5,.8) node[anchor=north west,scale=1.5] {$v_c$};
\end{scriptsize}
\end{tikzpicture}
    \caption{Forbidden pattern.}
    \label{fig:enter-label-3}
\end{figure}
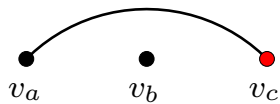
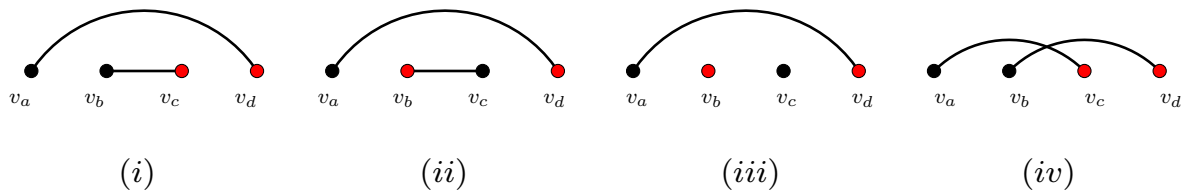
\begin{figure}[H]
    \centering
    \begin{tikzpicture}[line cap=round,line join=round,x=1.0cm,y=1.0cm, scale=1]
\clip(1.,0.) rectangle (19.,4.);
\draw [line width=1.pt] (3.,2.)-- (4.,2.);
\draw [shift={(3.5,1.)},line width=1.pt]  plot[domain=0.5880026035475675:2.5535900500422257,variable=\t]({1.*1.8027756377319948*cos(\t r)+0.*1.8027756377319948*sin(\t r)},{0.*1.8027756377319948*cos(\t r)+1.*1.8027756377319948*sin(\t r)});
\draw [line width=1.pt] (7.,2.)-- (8.,2.);
\draw [shift={(7.5,1.)},line width=1.pt]  plot[domain=0.5880026035475675:2.5535900500422257,variable=\t]({1.*1.8027756377319948*cos(\t r)+0.*1.8027756377319948*sin(\t r)},{0.*1.8027756377319948*cos(\t r)+1.*1.8027756377319948*sin(\t r)});
\draw [shift={(11.5,1.)},line width=1.pt]  plot[domain=0.5880026035475675:2.5535900500422257,variable=\t]({1.*1.8027756377319948*cos(\t r)+0.*1.8027756377319948*sin(\t r)},{0.*1.8027756377319948*cos(\t r)+1.*1.8027756377319948*sin(\t r)});
\draw [shift={(15.,1.)},line width=1.pt]  plot[domain=0.7853981633974483:2.356194490192345,variable=\t]({1.*1.4142135623730951*cos(\t r)+0.*1.4142135623730951*sin(\t r)},{0.*1.4142135623730951*cos(\t r)+1.*1.4142135623730951*sin(\t r)});
\draw [shift={(16.,1.)},line width=1.pt]  plot[domain=0.7853981633974483:2.356194490192345,variable=\t]({1.*1.4142135623730951*cos(\t r)+0.*1.4142135623730951*sin(\t r)},{0.*1.4142135623730951*cos(\t r)+1.*1.4142135623730951*sin(\t r)});
\begin{scriptsize}
\draw [fill=black] (2.,2.) circle (2.5pt);
\draw [fill=black] (3.,2.) circle (2.5pt);
\draw [fill=red] (4.,2.) circle (2.5pt);
\draw [fill=red] (5.,2.) circle (2.5pt);
\draw [fill=black] (6.,2.) circle (2.5pt);
\draw [fill=red] (7.,2.) circle (2.5pt);
\draw [fill=black] (8.,2.) circle (2.5pt);
\draw [fill=red] (9.,2.) circle (2.5pt);
\draw [fill=black] (10.,2.) circle (2.5pt);
\draw [fill=red] (11.,2.) circle (2.5pt);
\draw [fill=black] (12.,2.) circle (2.5pt);
\draw [fill=red] (13.,2.) circle (2.5pt);
\draw [fill=black] (14.,2.) circle (2.5pt);
\draw [fill=black] (15.,2.) circle (2.5pt);
\draw [fill=red] (16.,2.) circle (2.5pt);
\draw [fill=red] (17.,2.) circle (2.5pt);
\draw (1.6,1.8) node[anchor=north west,scale=1.] {$v_a$};
\draw (2.6,1.8) node[anchor=north west,scale=1.] {$v_b$};
\draw (3.6,1.8) node[anchor=north west,scale=1.] {$v_c$};
\draw (4.6,1.8) node[anchor=north west,scale=1.] {$v_d$};
\draw (3,1) node[anchor=north west,scale=1.5] {$(i)$};

\draw (5.7,1.8) node[anchor=north west,scale=1.] {$v_a$};
\draw (6.7,1.8) node[anchor=north west,scale=1.] {$v_b$};
\draw (7.7,1.8) node[anchor=north west,scale=1.] {$v_c$};
\draw (8.7,1.8) node[anchor=north west,scale=1.] {$v_d$};
\draw (7,1) node[anchor=north west,scale=1.5] {$(ii)$};

\draw (9.8,1.8) node[anchor=north west,scale=1.] {$v_a$};
\draw (10.8,1.8) node[anchor=north west,scale=1.] {$v_b$};
\draw (11.8,1.8) node[anchor=north west,scale=1.] {$v_c$};
\draw (12.8,1.8) node[anchor=north west,scale=1.] {$v_d$};
\draw (11,1) node[anchor=north west,scale=1.5] {$(iii)$};

\draw (13.9,1.8) node[anchor=north west,scale=1.] {$v_a$};
\draw (14.9,1.8) node[anchor=north west,scale=1.] {$v_b$};
\draw (15.9,1.8) node[anchor=north west,scale=1.] {$v_c$};
\draw (16.9,1.8) node[anchor=north west,scale=1.] {$v_d$};
\draw (15,1) node[anchor=north west,scale=1.5] {$(iv)$};

\end{scriptsize}
\end{tikzpicture}
    \caption{Forbidden patterns.}
    \label{fig:enter-label-4}
\end{figure}
Motivated by the above result of Hell and Huang \cite{hell}, in the Theorem 4 we provide a characterization of circular-arc bigraphs in terms of forbidden patterns.
\begin{theo}
   \textit{ Let $G$ be a bipartite graph with bipartition $(X,Y)$. Then the following statements are equivalent}:
    \begin{itemize}
        \item \textit{$G$ is a circular-arc bigraph;} 
        \item \textit{The vertices of $G$ can be ordered $v_1$, $v_2$, $v_3$,..., $v_n$, so that there do not exist $i<j<k<l$  in the configurations in Figure 6}. (Black vertices are in $X$, red vertices in $Y$, or conversely, and all edges not shown are absent.)
    \end{itemize}
\end{theo}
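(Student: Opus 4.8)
The plan is to derive the equivalence from the total-circular ordering characterization already established in \cref{t3}. Since \cref{t3} says that $G$ is a circular-arc bigraph exactly when $V(G)$ admits a total-circular ordering, the whole statement reduces to a purely combinatorial fact about a single linear ordering: a vertex ordering $v_1, v_2, \dots, v_n$ of $G$ is a total-circular ordering in the sense of \cref{d1} (with the wrap-around index sets exactly as written there) if and only if there is no quadruple $i < j < k < l$ on which $v_i, v_j, v_k, v_l$ realize one of the configurations of Figure~6. Granting this, both implications follow at once: if $G$ is a circular-arc bigraph then by \cref{t3} it has a total-circular ordering, which is pattern-free; and a pattern-free ordering is total-circular, so \cref{t3} makes $G$ a circular-arc bigraph.

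For the direction total-circular $\Rightarrow$ pattern-free I argue by contraposition. Suppose $v_i, v_j, v_k, v_l$ with $i<j<k<l$ realize a configuration of Figure~6. Each such configuration designates one present edge, running between a vertex $x_s \in X$ and a vertex $y_t \in Y$ among the four, and it places the remaining two vertices so that one is a vertex of $Y$ lying (in cyclic order) strictly between $y_t$ and $x_s$ on the ``short'' side and non-adjacent to $x_s$, while the other is a vertex of $X$ lying on the complementary side and non-adjacent to $y_t$. That is precisely a witness that both alternatives of the relevant clause, (a) or (b), of \cref{d1} fail at the edge $x_s y_t$; hence $v_1, \dots, v_n$ is not a total-circular ordering. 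Note that the adjacency status of the fourth vertex pair is never used here, which is why configurations of Figure~6 differing only in that pair are all genuine obstructions.

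For the converse, suppose $v_1, \dots, v_n$ is pattern-free but not total-circular. Using the symmetry between clauses (a) and (b) of \cref{d1} and between $X$ and $Y$ (the latter matching the ``or conversely'' colour convention of Figure~6), we may assume clause (a) fails at an edge $x_i y_j$ with $i > j$. Then there is a vertex $v_k \in Y$ with $j < k < i$ and $x_i y_k \notin E$, and a vertex $v_l \in X$ with $l \in \{i+1, \dots, n\} \cup \{1, \dots, j-1\}$ and $x_l y_j \notin E$. If $l > i$ the four vertices in increasing index order are $v_j, v_k, v_i, v_l$ with $v_j, v_k \in Y$ and $v_i, v_l \in X$; if $l < j$ they are $v_l, v_j, v_k, v_i$ with $v_l, v_i \in X$ and $v_j, v_k \in Y$. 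In both cases $x_i y_j$ is present, $x_i y_k$ and $x_l y_j$ are absent, and the fourth pair $x_l y_k$ may be either present or absent; the two choices of $l$ together with this dichotomy are exactly what the configurations of Figure~6 enumerate, so $v_i, v_j, v_k, v_l$ realize one of them, contradicting pattern-freeness. Hence the ordering is total-circular and \cref{t3} applies.

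The step where care is genuinely required is the bookkeeping just described: one must check that clauses (a) and (b) of \cref{d1}, combined with the two positions the ``outer'' witness $v_l$ can take relative to the clock origin and with the free status of the edge $x_l y_k$, unfold into precisely the configurations drawn in Figure~6 and nothing more. If instead Figure~6 is taken in a minimal form in which that fourth edge is required to be absent, the converse needs one extra move: among all violations of \cref{d1} choose one of smallest cyclic span, and show that a present fourth edge forces a strictly smaller violation, contradicting minimality — this is the device Hell and Huang use in \cite{hell} to pass between their three-vertex and four-vertex characterizations of interval bigraphs, and it is the only part of the argument that is not routine unwinding of definitions. Alternatively, one can avoid \cref{t3} in the ``$\Leftarrow$'' direction and build a circular-arc model directly, assigning to each $v_i = x_i \in X$ the closed arc running anticlockwise from $i$ to its last consecutive $Y$-neighbour (and symmetrically for $Y$), exactly as in the sufficiency proof of \cref{t3}; verifying $x_i y_j \in E \iff A_i \cap A_j \neq \emptyset$ then reproduces the same case analysis.
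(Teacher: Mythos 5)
Your proposal is correct, but it takes a genuinely different route from the paper's. The paper proves both directions directly from the circular-arc model: for necessity it orders the vertices by clockwise endpoints and runs a geometric case analysis (whichever of $A_i$, $A_k$ contains the other's clockwise endpoint forces an edge that the configuration forbids), and for sufficiency it rebuilds the arcs $A_i=[m_i,i]$ from scratch and extracts one of the four configurations from the assumption that two arcs of adjacent vertices are disjoint. You instead isolate the purely combinatorial lemma that an ordering is total-circular in the sense of \cref{d1} if and only if it avoids the configurations of Figure 6, and then invoke \cref{t3} for both implications. Your bookkeeping checks out: a failure of clause (a) at an edge $x_iy_j$ ($i>j$) yields witnesses $y_k$ with $j<k<i$ and $x_l$ in the outer range, and the two positions of $l$ (either $l>i$, giving the pattern $YYXX$, or $l<j$, giving $XYYX$) combined with the free status of the pair $x_ly_k$ produce exactly the four configurations up to the colour swap permitted by the ``or conversely'' convention; conversely, each configuration exhibits a distinguished edge at which both alternatives of \cref{d1} fail, the other two vertices serving as the witnesses. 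Your closing caveat about a ``minimal form'' of Figure 6 is unnecessary here: the figure already lists both versions of the fourth edge ((i) versus (ii), and (iii) versus (iv)), so no minimal-counterexample device in the style of Hell and Huang is needed. What your approach buys is modularity --- the arc-model reasoning is done once, inside \cref{t3} --- at the cost of making the forbidden-pattern characterization depend on the total-circular one rather than standing alone; the paper's self-contained proof, by contrast, repeats the arc construction but exposes the geometric meaning of each configuration.
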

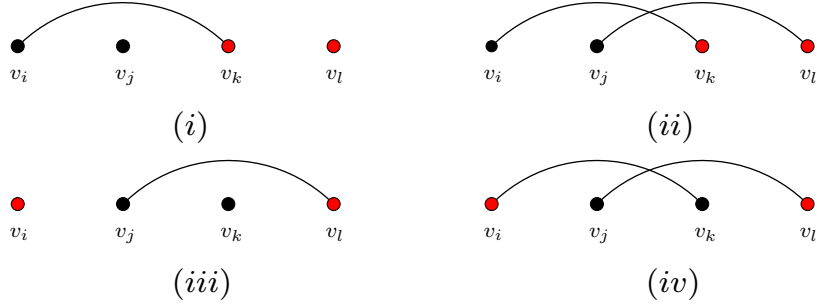
\begin{figure}[H]
    \centering
    \begin{tikzpicture}[line cap=round,line join=round,x=1.0cm,y=1.0cm,scale=.7]
\clip(1.,-1) rectangle (18.,6.);
\draw [shift={(4.,2.)},line width=.5pt]  plot[domain=0.7853981633974483:2.356194490192345,variable=\t]({1.*2.8284271247461903*cos(\t r)+0.*2.8284271247461903*sin(\t r)},{0.*2.8284271247461903*cos(\t r)+1.*2.8284271247461903*sin(\t r)});
\draw [shift={(13.,2.)},line width=.5pt]  plot[domain=0.7853981633974483:2.356194490192345,variable=\t]({1.*2.8284271247461903*cos(\t r)+0.*2.8284271247461903*sin(\t r)},{0.*2.8284271247461903*cos(\t r)+1.*2.8284271247461903*sin(\t r)});
\draw [shift={(15.,2.)},line width=.5pt]  plot[domain=0.7853981633974483:2.356194490192345,variable=\t]({1.*2.8284271247461903*cos(\t r)+0.*2.8284271247461903*sin(\t r)},{0.*2.8284271247461903*cos(\t r)+1.*2.8284271247461903*sin(\t r)});
\draw [shift={(6.,-1.)},line width=.5pt]  plot[domain=0.7853981633974483:2.356194490192345,variable=\t]({1.*2.8284271247461903*cos(\t r)+0.*2.8284271247461903*sin(\t r)},{0.*2.8284271247461903*cos(\t r)+1.*2.8284271247461903*sin(\t r)});
\draw [shift={(13.,-1.)},line width=.5pt]  plot[domain=0.7853981633974483:2.356194490192345,variable=\t]({1.*2.8284271247461903*cos(\t r)+0.*2.8284271247461903*sin(\t r)},{0.*2.8284271247461903*cos(\t r)+1.*2.8284271247461903*sin(\t r)});
\draw [shift={(15.,-1.)},line width=.5pt]  plot[domain=0.7853981633974483:2.356194490192345,variable=\t]({1.*2.8284271247461903*cos(\t r)+0.*2.8284271247461903*sin(\t r)},{0.*2.8284271247461903*cos(\t r)+1.*2.8284271247461903*sin(\t r)});
\begin{scriptsize}
\draw [fill=black] (2.,4.) circle (3.5pt);
\draw (1.7,3.7) node[anchor=north west,scale=1] {$v_i$};
\draw [fill=black] (4.,4.) circle (3.5pt);
\draw (3.7,3.7) node[anchor=north west,scale=1] {$v_j$};
\draw [fill=red] (6.,4.) circle (3.5pt);
\draw (5.7,3.7) node[anchor=north west,scale=1] {$v_k$};
\draw [fill=red] (8.,4.) circle (3.5pt);
\draw (7.7,3.7) node[anchor=north west,scale=1] {$v_l$};
\draw (4.7,3) node[anchor=north west,scale=1.5] {$(i)$};
\draw [fill=black] (11.,4.) circle (3pt);
\draw (10.7,3.7) node[anchor=north west,scale=1] {$v_i$};
\draw [fill=black] (13.,4.) circle (3.5pt);
\draw (12.7,3.7) node[anchor=north west,scale=1] {$v_j$};
\draw [fill=red] (15.,4.) circle (3.5pt);
\draw (14.7,3.7) node[anchor=north west,scale=1] {$v_k$};
\draw [fill=red] (17.,4.) circle (3.5pt);
\draw (16.7,3.7) node[anchor=north west,scale=1] {$v_l$};
\draw [fill=red] (2.,1.) circle (3.5pt);
\draw (13.7,3) node[anchor=north west,scale=1.5] {$
(ii)$};
\draw (1.7,.7) node[anchor=north west,scale=1] {$v_i$};
\draw [fill=black] (4.,1.) circle (3.5pt);
\draw (3.7,.7) node[anchor=north west,scale=1] {$v_j$};
\draw [fill=black] (6.,1.) circle (3.5pt);
\draw (5.7,.7) node[anchor=north west,scale=1] {$v_k$};
\draw [fill=red] (8.,1.) circle (3.5pt);
\draw (7.7,.7) node[anchor=north west,scale=1] {$v_l$};
\draw [fill=red] (11.,1.) circle (3.5pt);
\draw (4.7,0) node[anchor=north west,scale=1.5] {$
(iii)$};
\draw (10.7,.7) node[anchor=north west,scale=1] {$v_i$};
\draw [fill=black] (13.,1.) circle (3.5pt);
\draw (12.7,.7) node[anchor=north west,scale=1] {$v_j$};
\draw [fill=black] (15.,1.) circle (3.5pt);
\draw (14.7,.7) node[anchor=north west,scale=1] {$v_k$};
\draw [fill=red] (17.,1.) circle (3.5pt);
\draw (16.7,.7) node[anchor=north west,scale=1] {$v_l$};
\draw (13.7,0) node[anchor=north west,scale=1.5] {$
(iv)$};
\end{scriptsize}
\end{tikzpicture}
    \caption{Forbidden patterns.}
    \label{fig:enter-label-5}
\end{figure}

\begin{proof}
    Necessity: Let $B=(X,Y,E)$  be a circular-arc bigraph with 
$n$ vertices. Arrange the vertices of $B$ according to  increasing order of the clockwise endpoints of the corresponding circular arcs. Let this ordering of the vertices be $v_1$, $v_2$, $v_3$,..., $v_n$. \\
   Now, we need to demonstrate that in this specific ordering, the configurations shown in Figure 6 cannot occur.\\
   Let $v_i$, $v_j$, $v_k$, $v_l$ be four vertices of $B$ such that $i<j<k<l$, where $v_i$, $v_j$ $\in$ $X$ and $v_k$, $v_l$ $\in$ $Y$. If $v_i$ is adjacent to $v_k$, it leads to the following two possible cases (where white vertices can belong to any partite set):\\
   
   \textbf{Case 1.}
   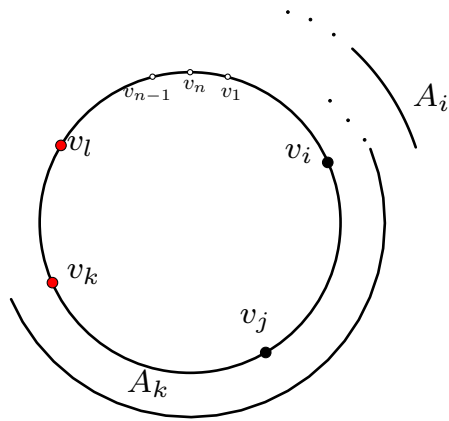
\begin{figure}[H]
       \centering
       \begin{tikzpicture}[line cap=round,line join=round,x=1.0cm,y=1.0cm,scale=1.]
\clip(6.,3.) rectangle (13.,9.);
\draw [line width=1.pt] (9.,6.) circle (2.cm);
\draw [shift={(9.,6.)},line width=1.pt]  plot[domain=-2.7367008673047097:0.3876695783739986,variable=\t]({1.*2.5893628559937287*cos(\t r)+0.*2.5893628559937287*sin(\t r)},{0.*2.5893628559937287*cos(\t r)+1.*2.5893628559937287*sin(\t r)});
\draw [shift={(9.,6.)},line width=1.pt]  plot[domain=0.32175055439664224:0.8204714939306736,variable=\t]({1.*3.1622776601683795*cos(\t r)+0.*3.1622776601683795*sin(\t r)},{0.*3.1622776601683795*cos(\t r)+1.*3.1622776601683795*sin(\t r)});
\begin{scriptsize}
\draw [fill=white] (9.5,7.94) circle (1.pt);
\draw (9.3,7.9) node[anchor=north west,scale=1] {$v_1$};

\draw [fill=white] (9.,8.) circle (1.pt);
\draw (8.8,8) node[anchor=north west,scale=1] {$v_n$};

\draw [fill=white] (8.5,7.94) circle (1.pt);
\draw (8.,7.9) node[anchor=north west,scale=1] {$v_{n-1}$};

\draw [fill=black] (10.83,6.8) circle (2.pt);
\draw (10.1,7.2) node[anchor=north west,scale=1.5] {$v_i$};

\draw [fill=black] (10.005240914749061,4.270985626631615) circle (2.pt);
\draw (9.5,5) node[anchor=north west,scale=1.5] {$v_j$};

\draw [fill=red] (7.17,5.2) circle (2.pt);
\draw (7.2,5.6) node[anchor=north west,scale=1.5] {$v_k$};

\draw [fill=red] (7.283275503697917,7.026088204686302) circle (2.pt);
\draw (7.2,7.3) node[anchor=north west,scale=1.5] {$v_l$};

\draw [fill=black] (11.32,7.1) circle (.5pt);
\draw [fill=black] (11.1,7.36) circle (.5pt);
\draw [fill=black] (10.86,7.62) circle (.5pt);
\draw [fill=black] (10.3,8.8) circle (.5pt);
\draw [fill=black] (10.6,8.7) circle (.5pt);
\draw [fill=black] (10.91,8.5) circle (.5pt);
\draw (8,4.2) node[anchor=north west,scale=1.5] {$A_k$};
\draw (11.8,8) node[anchor=north west,scale=1.5] {$A_i$};

\end{scriptsize}
\end{tikzpicture}
       \caption{Clockwise end point of $A_i$ lies in $A_k$.}
       \label{fig:enter-label-6}
   \end{figure}
   For this case, $v_j$ must be adjacent to $v_k$. However, in the Figure 6(i), there is no edge between $v_j$ and $v_k$, which contradicts the first configuration of Figure 6.\\
   \par
   \textbf{Case 2.}
   
\begin{figure}[H]
    \centering

   \begin{tikzpicture}[line cap=round,line join=round,x=1.0cm,y=1.0cm,scale=1.]
\clip(5.,1.) rectangle (11.,8.);
\draw [line width=1.pt] (8.,5.) circle (2.cm);
\draw [shift={(8.,5.)},line width=1.pt]  plot[domain=0.3966104021074092:3.53219969728748,variable=\t]({1.*2.536927275268253*cos(\t r)+0.*2.536927275268253*sin(\t r)},{0.*2.536927275268253*cos(\t r)+1.*2.536927275268253*sin(\t r)});
\draw [shift={(8.,5.)},line width=1.pt]  plot[domain=3.4972285378905528:4.71238898038469,variable=\t]({1.*2.986904752415115*cos(\t r)+0.*2.986904752415115*sin(\t r)},{0.*2.986904752415115*cos(\t r)+1.*2.986904752415115*sin(\t r)});
\begin{scriptsize}
\draw [fill=white] (8.,7.) circle (1pt);
\draw [fill=white] (8.827605888602369,6.820732954925209) circle (1pt);
\draw [fill=white] (7.189651391201308,6.828478912161151) circle (1pt);
\draw [fill=black] (9.85,5.8) circle (2.5pt);
\draw [fill=black] (9.714985851425087,3.9710084891449458) circle (2.5pt);
\draw [fill=red] (6.13,4.3) circle (2.5pt);
\draw [fill=red] (6.269596358729952,6.002847564826958) circle (2.5pt);
\draw [fill=black] (5.74,3.92) circle (.5pt);
\draw [fill=black] (5.98,3.68) circle (.5pt);
\draw [fill=black] (6.2,3.46) circle (.5pt);
\draw [fill=black] (8.26,2.08) circle (.5pt);
\draw [fill=black] (8.64,2.16) circle (.5pt);
\draw [fill=black] (8.96,2.28) circle (.5pt);
\draw (7.8,7) node[anchor=north west,scale=1] {$v_n$};
\draw (7.,6.8) node[anchor=north west,scale=1] {$v_{n-1}$};
\draw (8.4,6.8) node[anchor=north west,scale=1] {$v_1$};
\draw (5,3) node[anchor=north west,scale=1.5] {$A_k$};
\draw (8,8.1) node[anchor=north west,scale=1.5] {$A_i$};
\draw (9,6.1) node[anchor=north west,scale=1.5] {$v_i$};
\draw (9,4.3) node[anchor=north west,scale=1.5] {$v_j$};
\draw (6.2,4.4) node[anchor=north west,scale=1.5] {$v_k$};
\draw (6.2,6.1) node[anchor=north west,scale=1.5] {$v_l$};

\end{scriptsize}
\end{tikzpicture}
 \caption{Clockwise end point of $A_k$ lies in $A_i$.}
    \label{fig:enter-label-7}
\end{figure}
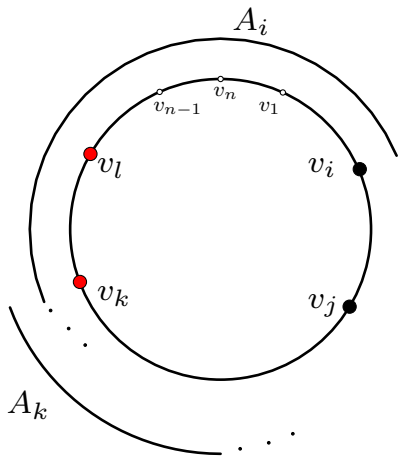
 For this case, $v_l$ must be adjacent to $v_i$. However, in the Figure 6(i), there is no edge between $v_l$ and $v_i$, which contradicts the first configuration of Figure 6.\\
 Therefore there do not exist $i<j<k<l$ in the configuration $(i)$ of Figure 6.\\
 Next, by similar argument, we can examine that for this ordering of the vertices there do not exist $i<j<k<l$ in the configuration $(ii)$, $(iii)$ and $(iv)$ in Figure 6.
 \par Sufficiency: Let us consider that the vertices of $B=(X,Y,E)$ can be ordered $v_1$, $v_2$, ..., $v_n$. So that there do not exist $i<j<k<l$ corresponding to any of the configurations in Figure 6.\\
 We will now construct a family of circular arcs $\mathcal{A}=\{A_i: 1\leq i\leq n\}$ corresponding to the vertices of $B$. Let $A_i=[m_i,i]$, ($1\leq i\leq n$) where $v_{m_i}$  is the last consecutive vertex from the opposite partite set that is adjacent to $v_i$ in the anticlockwise direction (starting from the vertex $v_i$). That is, if $v_i\in X$, then $v_{m_i}\in Y$ and conversely, if $v_i\in Y$, then $v_{m_i}\in X$.\\
 Now, we have only to show that $A_i\cap A_k\neq\phi$ if and only if $v_iv_k\in E$, where $v_i$ and $v_k$ are vertices of different partite sets.\\
 If $A_i\cap A_k\neq\phi$, then this intersection can occur in one of two ways, see Figure $9(i)$ or $9(ii)$.
 \begin{figure}[H]
     \centering
     \begin{tikzpicture}[line cap=round,line join=round,x=1.0cm,y=1.0cm]
\clip(-4.5,1.) rectangle (12,9.);
\draw [line width=1.pt] (7.980034032898469,4.820306296086217) circle (2.1797851460003312cm);
\draw [shift={(7.980034032898469,4.820306296086217)},line width=1.pt]  plot[domain=0.4567461636507309:3.468531325972799,variable=\t]({1.*2.6295111433068192*cos(\t r)+0.*2.6295111433068192*sin(\t r)},{0.*2.6295111433068192*cos(\t r)+1.*2.6295111433068192*sin(\t r)});
\draw [line width=1.pt] (-1.,5.) circle (2.cm);
\draw [shift={(8.,5.)},line width=1.pt]  plot[domain=3.2095853215937065:4.71238898038469,variable=\t]({1.*2.9694753582657367*cos(\t r)+0.*2.9694753582657367*sin(\t r)},{0.*2.9694753582657367*cos(\t r)+1.*2.9694753582657367*sin(\t r)});
\draw [shift={(-1.,5.)},line width=1.pt]  plot[domain=-3.0722065965203265:0.9708947512270215,variable=\t]({1.*2.33397037997512*cos(\t r)+0.*2.33397037997512*sin(\t r)},{0.*2.33397037997512*cos(\t r)+1.*2.33397037997512*sin(\t r)});
\draw [shift={(-1.,5.)},line width=1.pt]  plot[domain=0.499684617825641:1.3763509501398137,variable=\t]({1.*2.579162802238808*cos(\t r)+0.*2.579162802238808*sin(\t r)},{0.*2.579162802238808*cos(\t r)+1.*2.579162802238808*sin(\t r)});
\begin{scriptsize}
\draw [fill=white] (7.980034032898469,7.000091442086548) circle (1pt);
\draw (7.6,7) node[anchor=north west,scale=1.5] {$v_n$};

\draw [fill=black] (9.93,5.8) circle (2.5pt);
\draw [fill=red] (5.800248886898137,4.820306296086217) circle (2.5pt);
\draw [fill=black] (8.26,2.08) circle (.5pt);
\draw [fill=black] (8.64,2.16) circle (.5pt);
\draw [fill=black] (8.96,2.28) circle (.5pt);
\draw [fill=white] (-1.,7.) circle (1pt);
\draw (-1.3,7) node[anchor=north west,scale=1.5] {$v_n$};

\draw [fill=red] (5.966228857665243,3.986008987611857) circle (2.5pt);
\draw [fill=red] (-2.989990136070066,4.800151911833412) circle (2.5pt);
\draw [fill=black] (0.712607201786847,6.032945580554864) circle (2.5pt);
\draw [fill=black] (0.12912247494201323,6.650782371053442) circle (2.5pt);
%\draw [fill=black] (1.2638181622149622,6.235802609188868) circle (2.5pt);
\draw [fill=black] (-0.7,7.56) circle (.5pt);
\draw [fill=black] (-1.,7.54) circle (.5pt);
\draw [fill=black] (-1.3,7.5) circle (.5pt);
\draw (9.2,6.1) node[anchor=north west,scale=1.5] {$v_i$};
\draw (5.9,5.1) node[anchor=north west,scale=1.5] {$v_k$};
\draw (5.9,4.4) node[anchor=north west,scale=1.5] {$v_{m_i}$};
\draw (.1,6.1) node[anchor=north west,scale=1.5] {$v_i$};
\draw (-.5,6.7) node[anchor=north west,scale=1.5] {$v_{m_k}$};
\draw (-3,5.1) node[anchor=north west,scale=1.5] {$v_k$};
\draw (6.2,7.8) node[anchor=north west,scale=1.5] {$A_i$};
\draw (4.9,3.1) node[anchor=north west,scale=1.5] {$A_k$};
\draw (0.2,7.8) node[anchor=north west,scale=1.5] {$A_i$};
\draw (-2.2,2.8) node[anchor=north west,scale=1.5] {$A_k$};
\draw (-4.6,1.6) node[anchor=north west,scale=1.2] {Figure $9(i)$: clockwise end point of $A_i$ lies in $A_k$.};
\draw (3.8,1.6) node[anchor=north west,scale=1.2] {Figure $9(ii)$: clockwise end point of $A_k$ lies in $A_i$.};

\end{scriptsize}
\end{tikzpicture}
    % \caption{}
     \label{fig:enter-label-8}
 \end{figure}

Therefore, in either case, by the construction of $A_i$ and $A_k$, it is clear that $v_iv_k\in E$.\\
We now show that if, $v_iv_k\in E$ then $A_i\cap A_k\neq\phi$.
\par If possible, let $A_i\cap A_k=\phi$. By the constructions of $A_i$ and $A_k$, this implies the existence of a vertex $v_j$ from the same partite set as $v_i$ (where $i<j<k$) that is not adjacent to $v_k$ (i.e. $v_kv_j\notin E)$. Additionally, there must be another vertex $v_l$ belongs to the same partite set as $v_k$, has positioned between $v_k$ and $v_i$ in the clockwise direction, which is not adjacent to $v_i$ (i.e. $v_lv_i\notin E)$. Depending on the position of such $v_l$ we have two cases, see Figure $10(i)$ or $10(ii)$.\\

\begin{figure}[H]
    \centering
    \begin{tikzpicture}[line cap=round,line join=round,x=1.0cm,y=1.0cm]
\clip(2,.5) rectangle (18.,8.);
\draw [line width=1.pt] (6.,5.) circle (2.cm);
\draw [shift={(6.,5.)},line width=1.pt]  plot[domain=-0.008333140440135445:2.14717154738608,variable=\t]({1.*2.4000833318866244*cos(\t r)+0.*2.4000833318866244*sin(\t r)},{0.*2.4000833318866244*cos(\t r)+1.*2.4000833318866244*sin(\t r)});
\draw [shift={(6.,5.)},line width=1.pt]  plot[domain=3.141592653589793:5.255715449211019,variable=\t]({1.*2.48*cos(\t r)+0.*2.48*sin(\t r)},{0.*2.48*cos(\t r)+1.*2.48*sin(\t r)});
\draw [line width=1.pt] (13.,5.) circle (2.cm);
\draw [shift={(13.,5.)},line width=1.pt]  plot[domain=3.141592653589793:5.217058339757389,variable=\t]({1.*2.48*cos(\t r)+0.*2.48*sin(\t r)},{0.*2.48*cos(\t r)+1.*2.48*sin(\t r)});
\draw [shift={(13.,5.)},line width=1.pt]  plot[domain=0.:0.8258385310050181,variable=\t]({1.*2.48*cos(\t r)+0.*2.48*sin(\t r)},{0.*2.48*cos(\t r)+1.*2.48*sin(\t r)});
\begin{scriptsize}
\draw [fill=white] (6.,7.) circle (1.5pt);
\draw (5.8,6.8) node[anchor=north west,scale=1] {$v_n$};
\draw [fill=black] (8.,5.) circle (2.5pt);
\draw (7.3,5.2) node[anchor=north west,scale=1.5] {$v_i$};

\draw [fill=black] (7.723868430315539,3.9859597468732124) circle (2.5pt);
\draw (4.1,5.3) node[anchor=north west,scale=1.5] {$v_k$};
\draw (4.3,6.2) node[anchor=north west,scale=1.5] {$v_l$};
\draw (4.9,6.8) node[anchor=north west,scale=1.5] {$v_{m_i}$};
\draw (4.1,3) node[anchor=north west,scale=1.5] {$A_k$};
\draw (6.1,7.9) node[anchor=north west,scale=1.5] {$A_i$};

\draw (7.,4.4) node[anchor=north west,scale=1.5] {$v_j$};

\draw [fill=black] (7.0201530344336085,3.279741941935484) circle (2.5pt);
\draw (6.2,3.9) node[anchor=north west,scale=1.5] {$v_{m_k}$};
\draw [fill=red] (4.,5.) circle (2.5pt);

\draw [fill=red] (4.257384112780107,5.981473315790515) circle (2.5pt);
\draw [fill=red] (4.971008489144947,6.714985851425089) circle (2.5pt);
\draw [fill=white] (13.,7.) circle (1.5pt);
\draw [fill=black] (15.,5.) circle (2.5pt);
\draw [fill=red] (14.329997221961367,6.493689187741227) circle (2.5pt);
\draw [fill=red] (13.742781352708207,6.856953381770519) circle (2.5pt);
\draw [fill=red] (11.,5.) circle (2.5pt);
\draw [fill=black] (13.972971344926611,3.2526228907440444) circle (2.5pt);
\draw [fill=black] (14.734027887970035,4.003432248293083) circle (2.5pt);
\draw (14.3,5.3) node[anchor=north west,scale=1.5] {$v_i$};
\draw (11,5.3) node[anchor=north west,scale=1.5] {$v_k$};
\draw (13.9,4.4) node[anchor=north west,scale=1.5] {$v_j$};
\draw (13.2,3.9) node[anchor=north west,scale=1.5] {$v_{m_k}$};
\draw (13.3,7.47) node[anchor=north west,scale=1.5] {$v_l$};
\draw (4.1,5.3) node[anchor=north west,scale=1.5] {$v_k$};
\draw (13.5,6.6) node[anchor=north west,scale=1.5] {$v_{m_i}$};
\draw (12.7,7.5) node[anchor=north west,scale=1.] {$v_n$};
\draw (10.8,3.) node[anchor=north west,scale=1.5] {$A_k$};
\draw (15.25,6) node[anchor=north west,scale=1.5] {$A_i$};
%\draw (12.2,5.3) node[anchor=north west,scale=1.2] {$(1\leq l<i)$};
%\draw (5.1,5.3) node[anchor=north west,scale=1.2] {$(k<l\leq n)$};
\draw (2.5,2.2) node[anchor=north west,scale=1.2] {Figure $10(i)$: The vertex $v_l$ is such that};
\draw (5.5,1.8) node[anchor=north west,scale=1.2] { $(k<l\leq n)$.};

\draw (10.2,2.2) node[anchor=north west,scale=1.2] {Figure $10(ii)$: The vertex $v_l$ is such that};
\draw (13.5,1.8) node[anchor=north west,scale=1.2] {$(1\leq l<i)$.};

\end{scriptsize}
\end{tikzpicture}
    %\caption{}
    \label{fig: enter-label-9}
\end{figure}
If $k<l$, then we have four vertices $v_i$, $v_j$, $v_k$, $v_l$ with $i<j<k<l$ forming one of the configurations shown in Figure 6, either $(i)$ or $(ii)$, depending upon $v_jv_l\notin E$ or $v_jv_l\in E$. In either case we have a contradiction.\\
If $l<i$, then we have four vertices  $v_l$, $v_i$, $v_j$, $v_k$ with $l<i<j<k$ forming one of the configurations shown in Figure 6, either $(iii)$ or $(iv)$, depending upon $v_jv_l\notin E$ or $v_jv_l\in E$ (to simplify, we can rename the indices $l$, $i$, $j$, $k$  as $i$, $j$, $k$, $l$  respectively, which results in one of the configurations either $(iii)$ or $(iv)$ of Figure 6). Again we have a contradiction. Hence $v_iv_k\in E$ $\implies$ $A_i\cap A_k\neq\phi$.\\ 
Therefore, $v_iv_k\in E$ if and only if $A_i\cap A_k\neq\phi$ and so $B=(X,Y,E)$ is a circular-arc bigraph.

\end{proof}
\section{Conclusion}
The recognition algorithm of circular-arc graphs have been found in linear time after a long research \cite{dhh,McConnel}. As mentioned  before that very recently Francis, Hell, and Stacho \cite{fhs} have found certifying recognition algorithm for circular-arc graphs with running time $\mathcal{O} (n^3)$. Their algorithm is based on forbidden structures of circular-arc graphs. But the problem of finding an efficient recognition algorithm of circular-arc bigraphs is still open. We do hope that this paper is a motivating factor to settle this problem.\\

\noindent{}\textbf{Acknowledgement} \\
The first author sincerely acknowledges the Council of Scientific and Industrial Research (CSIR), India, for providing financial support through the CSIR Fellowship (File No. 09/0028\\(11986)/2021-EMR-I).

\end{document}